\newcommand{\ACzint}{\ensuremath{\usftext{AC}_0^\intern}}
\title{The strength of countable saturation}
\author{Benno van den Berg$^1$}
\address{${}^1$ Institute for Logic, Language and Computation, Universiteit van Amsterdam, P.O. Box 94242, NL, 1090 GE Amsterdam, the Netherlands. E-mail: bennovdberg@gmail.com.}
\author{Eyvind Briseid$^2$}
\address{${}^2$ GFU/LUI, Oslo and Akershus University College of Applied Sciences, PO box 4 St.~Olavs plass, NO-0130 Oslo, Norway. E-mail: eyvindbriseid@gmail.com. Supported by the Research Council of Norway (Project 204762/V30).}
\author{Pavol Safarik$^3$}
\address{$^3$ Fachbereich Mathematik, Technische Universit\"{a}t Darmstadt,
Schlo{\ss}gartenstra{\ss}e 7, 64289 Darmstadt. E-mail:  pavol.safarik@googlemail.com. Supported by the German Science Foundation (DFG project KO 1737/5-1).}
\date{19 May 2016}
\begin{document}

\begin{abstract}
\noindent We determine the proof-theoretic strength of the principle of countable saturation in the context of the systems for nonstandard arithmetic introduced in our earlier work.
\end{abstract}

\maketitle

\section{Introduction}

In \cite{bergbriseidsafarik12} we introduced two systems for nonstandard analysis, one based on classical logic and one on intuitionistic logic. Our aim was to have systems in which one can formalise large parts of nonstandard analysis, which are conservative over well-established standard systems and which allow one to extract computational information from nonstandard proofs.

We analysed various nonstandard principles, but an important principle which we did not discuss in any great detail was the principle of countable saturation:
\[
\CSAT: \quad \forallst n^0 \, \exists x^\sigma \, \Phi(n, x) \to \exists f^{0 \to \sigma} \, \forallst n^0 \, \Phi(n, f(n)).
\]
One reason why this principle is important is that it is involved in the construction of Loeb measures, an often used technique in nonstandard analysis. What we did say is that the principle can be proved in the intuitionistic system introduced in \cite{bergbriseidsafarik12}, while it adds greatly to the proof-theoretic strength of the classical system. The purpose of this short paper is to prove the first claim and to show that the addition of countable saturation to our classical system gives it the proof-theoretic strength of full second-order arithmetic. To show the latter we will give an interpretation of full second-order arithmetic in our classical system extended with countable saturation and rely on earlier work of Escard\'o and Oliva \cite{escardooliva14} to interpret countable saturation using Spector's bar recursion.

\section{Formalities}

In this paper we will work with extensions of the system $\eha$ of extensional Heyting arithmetic in all finite types. There are several variants of this system differing, for example, in the way they treat equality. For our purposes decidability of the atomic formulas is not important, so in this respect all variants are equally good. But for the reader who would like to see things fixed we could say we work with a version in which only equality of natural numbers is primitive and equality at higher types is defined extensionally. Also, we could have product types as a primitive notion or not; both options have their advantages and disadvantages, but for us it turns out to be more convenient to not have them as a primitive notion, so that we end up with the system $\eha$ as formalised in \cite[Section 3.3]{kohlenbach08} (this is the system called $\ehazero$ in \cite{troelstra73} and $\ehaarrow$ in \cite{troelstravandalen88b}). The price we have to pay, however, is that we often end up working with tuples of terms and variables of different types and we will have to adopt some conventions for how these ought to be handled. Fortunately, there are some standard conventions here which we will follow (see \cite{kohlenbach08,troelstra73} or \cite{bergbriseidsafarik12}).

What will be important for us, is that $\eha$ is able to handle finite sequences of objects of the same type (not to be confused with the metalinguistic notion of tuple from the previous paragraph). There are at least two ways of doing this: we could extend $\eha$ with types $\sigma^*$ for finite sequences of objects of type $\sigma$, add constants for the empty sequence and the operation of prepending an element to a finite sequence, as well as a list recursor satisfying the expected equations (as in \cite{bergbriseidsafarik12}). Alternatively, we could exploit  the fact that one can code finite sequences of objects of type $\sigma$ as a single object of type $\sigma$ in such a way that every object of type $\sigma$ codes a finite sequence (as in \cite{bergbriseid13}). Moreover, the standard operations on sequences (such as extracting their length or concatenating them) are given by terms in G\"odel's $\T$. For the purposes of this paper, it does not really matter what we do. But whether it is a genuine new type or just syntactic sugar, we will use the notation $\sigma^*$ for finite sequences of objects of type $\sigma$.

In fact, for us finite sequences are really stand-ins for finite sets. For this reason we will often use set-theoretic notation, such as $\emptyset$ for the empty sequence, $\cup$ for concatenation and $\{ x \}$ for the finite sequence of length 1 whose sole component is $x$. And for $x$ of type $\sigma$ and $y$ of type $\sigma^*$ we will write $x \in y$ if $x$ equals one of the components of the sequence $y$.

It remains to define the system $\ehast$ from \cite{bergbriseidsafarik12}. The language of $\ehast$ is obtained from that of $\eha$ by adding unary predicates $\st^\sigma$ as well as two new quantifiers $\forallst x^\sigma$ and $\existsst x^\sigma$ for every type $\sigma \in \T$. Formulas in the old language of $\eha$ (so those not containing these new symbols) we will call \emph{internal}; in contrast, general formulas from $\ehast$ will be called \emph{external}. We will adopt the following
\begin{quote}
{\large \sc{Important convention:}} We follow Nelson \cite{nelson88} in using small Greek letters to denote internal formulas and capital Greek letters to denote formulas which can be external.
\end{quote}

The system $\ehast$ is obtained by adding to $\eha$ the axioms $\EQ, \Tst$ and  $\IA^{\st{}}$, where
\begin{itemize}
\item $\EQ$ stands for the defining axioms of the external quantifiers:
\begin{eqnarray*}
\forallst x \, \Phi(x) & \leftrightarrow &  \forall x \, (\, \st(x)\rightarrow\Phi(x) \, ),\\
\existsst x \, \Phi(x) & \leftrightarrow & \exists x \, (\, \st(x)\wedge\Phi(x) \, ).
\end{eqnarray*}
\item $\Tst$ consists of:
\begin{enumerate}
\item the axioms $\st(x) \land x = y \to \st(y)$,
\item the axiom $\st(t)$ for each closed term $t$ in $\T$,
\item the axioms $\st(f)\wedge\st(x)\rightarrow\st(fx)$.
\end{enumerate}
\item $ \IA^{\st{}}$ is the external induction axiom:
\[
\IA^{\st{}} \quad : \quad\big(\Phi(0)\wedge\forallst x^0 (\Phi(x)\rightarrow\Phi(x+1) )\big)\rightarrow\forallst x^0 \Phi(x).
\]
\end{itemize}
In $\EQ$ and $\IA^{\st{}}$, the expression $\Phi(x)$ is an arbitrary external formula in the language of $\ehast$, possibly with additional free variables. Besides external induction in the form of $\IA^{\st{}}$, the system $\ehast$ also contains the internal induction axiom
\[ \varphi(0) \land \forall x^0 \, ( \, \varphi(x) \to \varphi(x+1) \, ) \to \forall x^0 \, \varphi(x), \]
simply because this is part of $\eha$. But here it is to be understood that this principle applies to internal formulas only. Of course, the laws of intuitionistic logic apply to all formulas of $\ehast$.

It is easy to see that $\ehast$ is a conservative extension of $\eha$: one gets an interpretation of $\ehast$ in $\eha$ by declaring everything to be standard. For more information on $\ehast$, we refer to \cite{bergbriseidsafarik12}. 
\section{Countable saturation is weak, intuitionistically}

The purpose of this section is to show that $\CSAT$ does not increase the proof-theoretic strength of the intuitionistic systems for nonstandard arithmetic considered in \cite{bergbriseidsafarik12}. Our main tool for showing this is the $\D$-interpretation from \cite{bergbriseidsafarik12}. We recall its salient features.

The $\D$-interpretation associates to every formula $\Phi(\tup a)$ in the language of $\ehast$ a new formula
\[ \Phi(\tup a)^\D :\equiv \existsst \tup x \, \forallst \tup y \, \varphi_{\D}(\tup x, \tup y, \tup a) \]
where all variables in the tuple $\tup x$ are of sequence type. We do this by induction on the structure of $\Phi(\tup a)$. If $\Phi(\tup a)$ is an atomic formula, then we put
   \begin{itemize}
      \item[(i)] $\varphi(\tup{a})^{\D}:\equiv \varphi_{\D}(\tup{a}):\equiv \varphi(\tup{a})$ if $\Phi(\tup{a})$ is internal atomic formula $\varphi(\tup a)$,
      \item[(ii)] $\st^{\sigma}(u^{\sigma})^{\D}:\equiv \existsst x^{\sigma^*} u \in_{\sigma} x$.
    \end{itemize}
If  $\Phi(\tup{a})^{\D}\equiv\existsst \tup{x}\forallst \tup{y} \, \varphi_{\D} (\tup{x},\tup{y},\tup{a})$ and $\Psi(\tup{b})^{\D}\equiv\existsst \tup{u}\forallst \tup{v} \, \psi_{\D} (\tup{u},\tup{v},\tup{b})$, then
    \begin{itemize}
      \item[(iii)] $(\Phi (\tup{a}) \land \Psi (\tup{b}))^{\D} :\equiv \existsst \tup{x},\tup{u} \forallst \tup{y},\tup{v} \, \big(\varphi_{\D} (\tup{x},\tup{y},\tup{a}) \land \psi_{\D} (\tup{u},\tup{v},\tup{b})\big),$
      \item[(iv)] $(\Phi (\tup{a}) \lor \Psi (\tup{b}))^{\D}  :\equiv \existsst \tup{x},\tup{u} \forallst \tup{y},\tup{v} \, \big(\varphi_{\D} (\tup{x},\tup{y},\tup{a}) \lor \psi_{\D} (\tup{u},\tup{v},\tup{b})\big),$

      \item[(v)] $(\Phi(\tup{a}) \to \Psi(\tup{b}))^{\D}  :\equiv   \existsst \tup{U},\tup{Y} \forallst \tup{x},\tup{v} \, \big(\forall \tup{y} \in \tup{Y}[\tup{x},\tup{v}]\, \varphi_{\D} (\tup{x},\tup{y}, \tup{a})    \to\psi_{\D} (\tup{U}[\tup{x}],\tup{v}, \tup{b})\big).$
      \end{itemize}
In the last line we have used $Y[x]$ as an abbreviation
for
\[ Y[x] := \bigcup_{y \in Y} y(x). \]
This can be regarded as a new application operation, whose associated $\lambda$-abstraction is given by
\[ \Lambda x. t(x) := \{ \lambda x. t(x) \} \]
(for then $(\Lambda x. t(x))[s] = t(s)$).

It remains to consider the quantifiers. For that, assume $\Phi(z,\tup{a})^{\D}\equiv\existsst \tup{x}\forallst \tup{y} \, \varphi_{\D} (\tup{x},\tup{y},z,\tup{a})$, with the free variable $z$ not occuring among the $\tup{a}$. Then
    \begin{itemize}
 \item[(vi)] $(\forall z\Phi(z,\tup{a}))^{\D} :\equiv \existsst \tup{x} \forallst \tup{y} \forall z \, \varphi_{\D} (\tup{x},\tup{y},z,\tup{a}),$
 \item[(vii)] $(\exists z\Phi(z,\tup{a}))^{\D} :\equiv \existsst \tup{x} \forallst \tup{y} \exists z \forall \tup{y'} \in \tup{y}\, \varphi_{\D} (\tup{x},\tup{y'},z,\tup{a}),$
 \item[(viii)] $(\forallst z\Phi(z,\tup{a}))^{\D} :\equiv \existsst \tup{X} \forallst z,\tup{y}  \, \varphi_{\D} (\tup{X}[z],\tup{y},z,\tup{a}),$
 \item[(ix)] $(\existsst z\Phi(z,\tup{a}))^{\D} :\equiv \existsst \tup{x},z \, \forallst \tup{y} \, \exists z' \in z \, \forall \tup y' \in \tup{y}\, \varphi_{\D} (\tup{x},\tup{y'},z',\tup{a}).$
\end{itemize}

We will write $\HH$ for $\ehast$ together with the schema $\Phi \leftrightarrow \Phi^\D$, where $\Phi$ can be any external formula.

\begin{rema}{altdefofHH}
Alternatively, we could define $\HH$ as
\[
     \HH := \ehast + \I + \NCR + \HAC + \HIP_{\forallst} + \HGMP.
\]
See \cite{bergbriseidsafarik12} for a definition of these principles and a proof of this fact.
\end{rema}

 The main result of \cite{bergbriseidsafarik12} on $\HH$ and the $\D$-interpretation was:
\begin{theo}{maintheoremonDst}
If $\HH \vdash \Phi$ and
\[ \Phi^\D :\equiv \existsst \tup x \, \forallst \tup y \, \varphi_\D(\tup x, \tup y), \]
then there is a sequence $\tup t$ of terms from G\"odel's $\T$ such that \[ \eha \vdash \forall \tup y \, \varphi_D(\tup t, \tup y). \] Since $\varphi^\D \equiv \varphi$ for internal $\varphi$, this implies that $\HH$ is a conservative extension of $\eha$.
\end{theo}

The aim of this section is to prove that $\HH \vdash \CSAT$. Before we can do that, we first need to observe that $\eha$ proves a version of the ``finite axiom of choice''.

\begin{lemm}{FACinEHA}
$\eha$ proves that
\[
     \forall s^{0^*} \, \big( \, \forall n \in s \, \exists x^{\sigma} \, \psi(n,x) \to \exists f^{0 \to \sigma} \, \forall n \in s \, \psi (n,f(n)) \, \big).
\]
\end{lemm}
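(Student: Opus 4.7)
The plan is to proceed by ordinary internal induction on the length $|s|$ of $s$. This is legitimate because the formula to be established is itself internal, so $\eha$'s induction scheme applies (and, if one prefers, the alternative formalisation of $\sigma^*$ with a primitive list recursor would allow one to induct on $s$ directly with the same effect). For the base case $|s| = 0$ the sequence is empty, the hypothesis is vacuous, and I only need to exhibit \emph{some} function $f^{0 \to \sigma}$; for instance the constant function with value a canonical inhabitant of $\sigma$, which is available as a closed term of $\T$ at every finite type. The conclusion $\forall n \in s \, \psi(n, f(n))$ is then also vacuous.

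For the inductive step, assume the claim for all sequences of length $k$, and let $|s| = k+1$. Decompose $s = \{m\} \cup s'$ with $|s'| = k$. Restricting the hypothesis to $s'$ and applying the induction hypothesis yields some $f'$ with $\forall n \in s' \, \psi(n, f'(n))$; specialising the hypothesis to $m$ itself, and using $\exists$-elimination, yields some $x^*$ with $\psi(m, x^*)$. Since equality of natural numbers is decidable in $\eha$, I can define
\[ f(n) \, := \, \text{if } n = m \text{ then } x^* \text{ else } f'(n), \]
which is a term of G\"odel's $\T$ in the parameters $m, x^*, f'$. A direct verification by cases on $n$ shows that $\forall n \in s \, \psi(n, f(n))$, completing the induction.

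The argument is essentially routine; the only mild point to watch — and the closest thing to an obstacle — is ensuring that the induction formula is internal and correctly parameterised, so I induct on $k$ on the statement $\forall s\,\big(|s| = k \to (\forall n \in s\,\exists x\,\psi(n,x) \to \exists f\, \forall n \in s\, \psi(n,f(n)))\big)$ with the quantifier over $s$ sitting inside the induction. Since $\psi$ is internal this whole formula is internal, so $\eha$'s induction applies without difficulty, and the standard availability of both a canonical inhabitant at every type and the basic term-level operations on finite sequences makes the remaining steps purely mechanical.
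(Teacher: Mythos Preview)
Your proof is correct and follows essentially the same route as the paper: induction on $|s|$, with the inductive step using decidability of equality at type $0$ to define $f$ by cases from the inductively obtained $f'$ and the witness for the new element. The paper additionally splits into subcases depending on whether the newly added element already occurs in the shorter sequence, but as your argument shows this distinction is not actually needed---your definition $f(n) := \text{if } n = m \text{ then } x^* \text{ else } f'(n)$ works uniformly, since if $n \in s'$ happens to equal $m$ then $\psi(n,f(n)) = \psi(m,x^*)$ still holds.
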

\begin{proof}
By induction on the length $|s|$ of the sequence $s$. Assume $\forall n \in s \, \exists x^{\sigma} \, \psi(n,x)$.
\begin{enumerate}
\item If $|s|= 0$, then any function $f$ will do.
\item If $|s|= k+1$, then write $s_i$ for the $i$th component of $s$ (where $s_0$ is the first and $s_k$ is the last) and $t = \langle s_0, \ldots, s_{k-1} \rangle$ for the sequence obtained from $s$ by deleting the last entry. By induction hypothesis, there is a function $f_0$ such that $\forall n \in t \, \psi(n, f_0(n))$. There are two possibilities:
        \begin{enumerate}
        \item There is $j \lt k$ such that $s_k= s_j$. Then $f_0$ also works for $s$.
        \item For all $j \lt k$ we have $s_k \not= s_j$. Then choose $x_0$ such that $\psi(s_k, x_0)$ and let
        \[
         f(n)=\left\{ \begin{array}{ll} x_0 & \mbox{if } n = s_k, \\
                                                              f_0(n)   & \mbox{else.}
                                     \end{array}
                            \right.
        \]
        \end{enumerate}
\end{enumerate}
\end{proof}

Note that the use of the decidability of equality of objects of type 0 in the previous lemma was necessary in view of the following observation:

\begin{lemm}{FACanddecofeq}
In $\eha$ the finite axiom of choice for sequences of objects of type $\sigma$
\[ \forall s^{\sigma^*} \, \big( \, \forall x \in s \, \exists y^{\tau} \, \psi(x,y) \to \exists f^{\sigma \to \tau} \, \forall x \in s \, \psi (x,f(x)) \, \big) \]
is equivalent to the decidability of the equality of objects of type $\sigma$.
\end{lemm}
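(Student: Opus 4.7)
The plan is to prove the two implications separately. The $(\Leftarrow)$ direction essentially reuses the argument of Lemma~\ref{FACinEHA}: assuming equality at type $\sigma$ is decidable, I would do induction on $|s|$ exactly as before, the only change being in the sub-case where $|s| = k + 1$ and we compare the last entry $s_k$ with the earlier entries $s_0, \ldots, s_{k-1}$ --- there we invoke decidability of equality at type $\sigma$ (instead of type $0$) to split into the cases $s_k \in \{s_0, \ldots, s_{k-1}\}$ and $s_k \notin \{s_0, \ldots, s_{k-1}\}$.

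For the $(\Rightarrow)$ direction, the idea is to encode a comparison between two arbitrary elements $x, y$ of type $\sigma$ as an instance of the finite axiom of choice over the two-element sequence $s := \langle x, y \rangle$. Concretely, I would work with $\tau = 0$ and the formula
\[
   \psi(z, n) \, :\equiv \, (z = x \land n = 0) \lor (z = y \land n = 1).
\]
The premise $\forall z \in s \, \exists n^0 \, \psi(z,n)$ holds trivially: the witness $n = 0$ works for the first entry $x$ of $s$ and $n = 1$ works for the second entry $y$. Applying the assumed finite axiom of choice yields a function $f^{\sigma \to 0}$ with $\psi(x, f(x))$ and $\psi(y, f(y))$.

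Now I would use decidability of equality at type $0$ (which is available in $\eha$) to decide the internal statement $f(x) = 0 \land f(y) = 1$. If it holds, then $x \neq y$, since $x = y$ would force $f(x) = f(y)$ and hence $0 = 1$. Otherwise, $f(x) \neq 0$ or $f(y) \neq 1$; in the first case $\psi(x, f(x))$ forces the second disjunct $x = y \land f(x) = 1$, and in the second case $\psi(y, f(y))$ forces $y = x \land f(y) = 0$. Either way $x = y$. Thus $x = y \lor x \neq y$, which is exactly decidability of equality at type $\sigma$.

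The only non-obvious step is choosing the right coding formula $\psi$; once $\psi$ is in hand, everything reduces to a case analysis on the values of $f(x)$ and $f(y)$, which is legitimate intuitionistically because these are natural numbers.
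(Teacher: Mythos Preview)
Your proof is correct and follows the same strategy as the paper: for $(\Leftarrow)$ both rerun the induction from the preceding lemma, and for $(\Rightarrow)$ both instantiate the finite axiom of choice with $\tau = 0$ on the two-element sequence $\langle x, y\rangle$ and reduce the question to a decidable comparison of natural numbers. The paper's coding is marginally slicker --- it takes $\psi(z,n) :\equiv (z = s_n)$ and decides whether $f(x) = f(y)$ rather than whether $f(x) = 0 \land f(y) = 1$ --- but the idea is identical.
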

\begin{proof}
If the equality of objects of type $\sigma$ is decidable, then the finite axiom of choice for sequences of objects of type $\sigma$ can be argued for as in \reflemm{FACinEHA}. Conversely, suppose that this finite axiom of choice holds and let $a$ and $b$ be two objects of type $\sigma$ and $s$ be the sequence $<a, b>$. Then $\forall x \in s \, \exists n^0 \, ( \, x = s_n)$, so by the finite axiom of choice we have a function $f: \sigma \to 0$ such that
\[ \forall x \in s \, x = s_{f(x)}. \]
We can now decide whether $f(a)$ and $f(b)$ are equal or not, as these are natural numbers. If $f(a) \not= f(b)$, then $a$ and $b$ cannot be equal. If, on the other hand, $f(a) = f(b)$, then $a = s_{f(a)} = s_{f(b)} = b$.
\end{proof}

\begin{theo}{DstrealizersforCSAT}
The theory $\HH$ proves $\CSAT$.
\end{theo}
\begin{proof}
Recall \[ \CSAT \equiv \forallst n^0 \, \exists x^\sigma \, \Phi(n, x) \to \exists f^{0 \to \sigma} \, \forallst n^0 \, \Phi(n, f(n)). \]
So suppose \[ \big(\Phi(n,x)\big)^{\D}\equiv \existsst \tup{u} \, \forallst \tup{v} \, \varphi (\tup{u},\tup{v},n,x). \]
Then
\[
         \big(   \, \forallst n^0 \, \exists x \, \Phi(n,x) \, \big)^{\D}\equiv \existsst \tup{U} \, \forallst n^0,\tup{w} \, \exists x \, \forall \tup v \in \tup w \, \varphi (\tup{U}[n],\tup v,n,x)
\]
and
\[
   \big(   \, \exists f \, \forallst n^0 \, \Phi(n,f(n)) \, \big)^{\D}  \equiv
    \existsst \tup{\tilde{U}} \, \forallst s, \tup{\tilde{w}} \, \exists f \, \forall \tilde{n}^0 \in s, \tilde{\tup v} \in \tup{\tilde{w}}  \, \varphi (\tup{\tilde{U}}[\tilde{n}],\tup{\tilde{v}},\tilde{n} ,f(\tilde{n})),
\]
so $\CSAT^{\D}$ is
\begin{displaymath}
\begin{array}{c}
          \existsst \tup{\tilde{U}},N, \tup{W} \, \forallst  \tup{U} ,s,\tup{\tilde{w}} \\
            \Big(  \, \forall n^0 \in N [\tup{U},s,\tup{\tilde{w}}]  \, \forall \tup w \in \tup{W}[\tup{U},s,\tup{\tilde{w}}] \, \exists x \, \forall \tup v \in \tup w \, \varphi(\tup U[n], \tup v, n, x) \to \\
\exists f  \, \forall \tilde n^0 \in s, \tup{\tilde v} \in \tup{\tilde{w}} \, \varphi(\tup{\tilde{U}} [ \tup{U}, \tilde{n}] , \tup{\tilde{v}} ,\tilde{n}, f(\tilde{n})) \,
           \Big).
\end{array}
\end{displaymath}
So if we put
\begin{eqnarray*}
    \tup{\tilde{U}}   &:=& \Lambda \,  \tup{U} ,\tilde{n} \, . \, \tup{U}[\tilde{n}], \\
     N   &:=& \Lambda \, \tup{U},s,\tup{\tilde{v}} \, . \, s, \\
      \tup{W}   &:=& \Lambda \, \tup{U},s,\tup{\tilde{w}}   \, . \,  \{ \tup{\tilde{w}} \},
\end{eqnarray*}
then we have to show that $\eha$ proves
\[ \forall n^0 \in s \, \exists x \, \forall \tup v \in \tup{\tilde{w}} \, \varphi(\tup U[n], \tup v, n, x) \to \exists f \, \forall \tilde{n}^0 \in s, \tup{\tilde{v}} \in \tup{\tilde{w}} \, \varphi(\tup U[\tilde{n}], \tup{\tilde v}, \tilde{n}, f(\tilde{n})). \]
But this is an instance of the finite axiom of choice (for $\psi(n^0, x) := \forall \tup v \in \tup{\tilde{w}} \, \varphi(\tup U[n], \tup v, n, x)$), so this follows from \reflemm{FACinEHA}.
\end{proof}

\section{Countable saturation is strong, classically}

From now on we will only work with classical systems. So let $\epa$ be $\eha$ together with the law of excluded middle and $\epast$ be $\ehast$ together with the law of excluded middle.

The aim of this section is to show that, in contrast to what happens in the intuitionistic case, the principle $\CSAT$ in combination with nonstandard principles is very strong in a classical setting. In fact, we need only a simple of form of overspill
\[ \OS_0: \forallst x^0 \, \varphi(x) \to \exists x^0 \, ( \, \lnot \st(x) \land \varphi(x) \, ) \]
in combination with ``$\CSAT$ for numbers''
\[
\CSAT_0: \quad \forallst n^0 \, \exists x^0 \, \Phi(n, x) \to \exists f^{0 \to 0} \, \forallst n^0 \, \Phi(n, f(n)).
\]
to obtain a theory which has at least the strength of second-order arithmetic (in the next section we will show that this lower bound is sharp). More precisely, we have:
\begin{theo}{CSATstrong}
The theory $\epast + \OS_0 + \CSAT_0$ interprets full second-order arithmetic.
\end{theo}
\begin{proof}
For convenience, let us write $\PA_2$ for full second-order classical arithmetic. The idea is to interpret the natural numbers in $\PA_2$ as standard natural numbers in $\epast$ and the subsets of $\NN$ in $\PA_2$ as arbitrary (possibly nonstandard) elements of type $0^*$ in $\epast$, where $n \in s$ is interpreted as: $n$ equals one of the entries of the sequence $s$ (as before). Now:
\begin{enumerate}
\item $\epast$ is a classical system, hence classical logic is interpreted.
\item The Peano axioms for standard natural numbers are part of $\epast$, so these are interpreted as well.
\item Full induction is interpreted, because $\epast$ includes the external induction axiom.
\item So it remains to verify full comprehension. For that it suffices to check that for every formula $\Phi(n^0)$ in the language of $\epast$ there is a sequence $s^{0^*}$ such that
\[ \forallst n \, \big( \, n \in s \leftrightarrow \Phi(n) \, \big). \]
First of all, note that we have
\[ \forallst n \, \exists k \, ( \, k = 0 \leftrightarrow \Phi(n) \, ) \]
by classical logic, so by $\CSAT_0$ there is a function $f^{0 \to 0}$ such that
\[ \forallst n \, ( \, f(n) = 0 \leftrightarrow \Phi(n) \, ). \]
It follows easily by external induction that
\[ \forallst k^0 \, \exists s^{0^*} \, \forall n \leq k \, ( \, n \in s \leftrightarrow f(n) = 0 \, ), \]
so $\OS_0$ gives us a sequence $s$ such that for any standard $n$ we have
\[ n \in s \leftrightarrow f(n) = 0 \leftrightarrow \Phi(n), \]
as desired.
\end{enumerate}
\end{proof}

\begin{rema}{comparisonwithNelson}
From the discussion in Chapter 4 of \cite{nelson77} it seems that $\epast + \OS_0 + \CSAT_0$ is a suitable framework for developing Nelson's ``radically elementary probability theory''. In this connection it is interesting to observe that theorems using $\CSAT_0$, which Nelson calls ``the sequence principle'', are starred in \cite{nelson77}, while in \cite{geyer07} the sequence principle is dropped altogether. Proof-theoretically this makes a lot of sense, because while $\epast + \OS_0$ is conservative over $\epa$ (see \reftheo{factsaboutPP} below), the system $\epast + \OS_0 + \CSAT_0$ has the strength of full second-order arithmetic.
\end{rema}

\section{The classical strength of countable saturation}

From now on we will work in $\epast$ extended with the principles
\begin{align*}
\I\ &:\quad \forallst x' \, \exists y \, \forall x \in x' \, \varphi(x, y) \to \exists y \, \forallst x \, \varphi(x, y) \quad \mbox{ and }\\
\HAC_\intern\ &:\quad  \forallst x \, \existsst y \, \varphi(x,y) \to \existsst F \, \forallst x \exists y \in F(x) \,  \varphi (x,y).
\end{align*}
For convenience we will abbreviate this theory as $\PP$. Note that $\I$ implies $\OS_0$ (see \cite[Proposition 3.3]{bergbriseidsafarik12}), so $\PP \vdash \OS_0$. The following theorem summarises the most important facts that we established about $\PP$ in \cite{bergbriseidsafarik12}:

\begin{theo}{factsaboutPP}
To any formula $\Phi$ in the language of $\epast$ one can associate one of the form
\[ \Phi^\Sh :\equiv \forallst \tup x \, \existsst \tup y \, \varphi_\Shb(\tup x, \tup y), \]
in such a way that the following hold:
\begin{enumerate}
\item $\Phi$ and $\Phi^\Sh$ are provably equivalent in $\PP$.
\item Whenever $\Phi$ is provable in $\PP$, there are terms $\tup t$ in G\"odel's $\T$ such that
\[ \epa \vdash \forall \tup x \, \exists \tup y \in \tup t(\tup x) \, \varphi_\Shb(\tup x, \tup y). \]
\item $\varphi^\Sh \equiv \varphi$ for internal formulas $\varphi$.
\end{enumerate}
Hence $\PP$ is a conservative extension of $\epa$.
\end{theo}

The aim of this section is to prove that the strength of $\PP$ extended with $\CSAT$ is precisely that of full second-order arithmetic. As we have already shown that $\PP$ extended with $\CSAT$ has at least the strength of full second-order arithmetic, it suffices to show that $\PP + \CSAT$ can be interpreted in a system which has the strength of full second-order arithmetic. We do this by showing that the $\Sh$-interpretation of $\CSAT$ can be witnessed using Spector's bar recursion \cite{spector62}, which has the strength of full second-order arithmetic \cite[p. 370]{avigadfeferman98}. In fact, recent work by Escard\'o and Oliva \cite{escardooliva14} has shown that the $\Sh$-interpretation of
\begin{align*}
\AC_0^{\st}\ &:\quad \forallst n^0 \, \existsst x^\sigma \, \Phi(n,x)\ \rightarrow \existsst f^{0 \to \sigma} \, \forallst n^0 \Phi(n,f(n))
\end{align*}
can be interpreted using bar recursion. So the following argument, which resembles that in Section 5 in  \cite{nelson88}, suffices to establish that $\PP + \CSAT$ has the strength of full second-order arithmetic:

\begin{theo}{firstred} $\PP \vdash \AC_0^{\st} \to \CSAT$.
\end{theo}
\begin{proof}
We work in $\PP + \AC_0^{\st}$ and have to show that
\[ \forallst n^0 \, \exists x^\sigma \, \Phi(n, x) \to \exists f^{0 \to \sigma} \, \forallst n^0 \, \Phi(n, f(n)). \]
In view of \reftheo{factsaboutPP} it suffices to show this in case $\Phi(n, x)$ is of the form $\forallst \tup u \, \existsst \tup v \, \phi(\tup u,\tup v,n,x)$. To keep the notation simple we will ignore tuples and write simply $u$ and $v$. So, in short, it suffices to show that
\begin{align}
\forallst n^0 \, \exists x \, \forallst u \, \existsst v \, \varphi(u,v,n,x) \label{e:s5p1}
\end{align}
implies
\begin{align}
\exists f \, \forallst n^0 \, \forallst u \, \existsst v \, \varphi(u,v,n,f(n)) \label{e:s5p2}.
\end{align}

By $\HAC_\intern$ we get that \eqref{e:s5p1} implies
\[
\forallst n^0 \, \exists x \, \existsst V \, \forallst u \, \exists v \in V(u) \, \varphi(u,v,n,x),
\]
which is logically equivalent to
\[
\forallst n^0 \, \existsst V \, \exists x \, \forallst u \, \exists v \in V(u) \, \varphi(u,v,n,x),
\]
which by $\AC_0^{\st}$ implies that
\begin{align*}
\existsst V \, \forallst n^0 \, \exists x \, \forallst u \, \exists v \in V(n, u) \, \varphi(u,v,n,x). \tag{\ref{e:s5p1}$'$}\label{e:s5f1}
\end{align*}

On the other hand, \eqref{e:s5p2} follows from
\[
\exists f \, \existsst V \, \forallst n^0,u \, \exists v \in V(n,u) \, \varphi(u,v,n,f(n)),
\]
which logically equivalent to
\[
\existsst V \, \exists f \, \forallst n^0,u \, \exists v \in V(n,u) \, \varphi(u,v,n,f(n)).
\]
By $\I$, this follows from
\[
\existsst V \, \forallst s^{0^*},t \, \exists f \, \forall n \in s, u \in t \, \exists v \in V(n, u) \,  \varphi(u,v,n,f(n)).
 \tag{\ref{e:s5p2}$'$}\label{e:s5f2}
\]
Hence it suffices to show that \eqref{e:s5f1} implies \eqref{e:s5f2}.

Now to do so, let some standard $V$ satisfy \eqref{e:s5f1}, so that we have
\begin{align}
\forallst n^0 \, \exists x \, \forallst u \, \exists v \in V(n,u) \, \varphi(u,v,n,x), \label{e:allX}
\end{align}
and fix arbitrary but standard $s^{0^*}$ and $t$. From  \eqref{e:allX} and the fact that the components of a standard finite sequence are again standard (see \cite[Lemma 2.11]{bergbriseidsafarik12}) it follows that
\[ \forall n \in s \, \exists x \, \forall u \in t \, \exists v \in V(n, u) \, \varphi(u, v, n, x) \]
which by \reflemm{FACinEHA} implies that
\[ \exists f \, \forall n \in s, u \in t \, \exists v \in V(n, u) \, \varphi(u, v, n, f(n)), \]
as desired.
\end{proof}

\begin{rema}{reverse}
In \cite{bergbriseid13} it is shown that the principle that we obtain by restricting $\AC_0^{\st}$ to internal formulas only
\[  \ACzint: \quad \forallst n^0 \, \existsst x^\sigma \, \varphi(n, x) \to \existsst f^{0 \to \sigma} \, \forallst n^0 \, \varphi(n, fn) \]
can be interpreted using a weak form of bar recursion (for binary trees). In the presence of this principle the implication in the previous theorem can be reversed, that is:
\begin{prop}{reverseimpl}
$\PP \vdash \CSAT \land \ACzint \to \AC_0^{\st}$.
\end{prop}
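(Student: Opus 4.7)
The plan is to derive $\AC_0^{\st}$ from $\CSAT$ together with $\ACzint$ in two stages: first use $\CSAT$, which handles external formulas, to collapse the leading $\forallst n\, \existsst x$ into a (possibly nonstandard) choice function, and then use $\ACzint$ to upgrade that function to a genuinely standard one.

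In detail, suppose $\forallst n^0 \, \existsst x^\sigma \, \Phi(n,x)$. Unfolding via $\EQ$, this is equivalent to $\forallst n^0 \, \exists x^\sigma \, (\st(x) \land \Phi(n,x))$. Applying $\CSAT$ to the external formula $\Phi'(n,x) :\equiv \st(x) \land \Phi(n,x)$ yields a (not necessarily standard) function $h^{0 \to \sigma}$ with
\[ \forallst n^0 \, \big( \, \st(h(n)) \land \Phi(n,h(n)) \, \big). \]
In particular, $\forallst n^0 \, \st(h(n))$, which by $\EQ$ reads $\forallst n^0 \, \existsst x^\sigma \, (h(n) = x)$. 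Now the formula $h(n) = x$ is internal — the nonstandard $h$ enters only as a free parameter — so $\ACzint$ applies and produces a standard $g^{0 \to \sigma}$ with $\forallst n^0 \, h(n) = g(n)$. Combining, $\existsst g \, \forallst n^0 \, \Phi(n,g(n))$, which is exactly the conclusion of $\AC_0^{\st}$.

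The only point requiring any care is that $\ACzint$ is being invoked on an internal matrix containing a nonstandard free parameter $h$; but $\ACzint$ as stated in the paper imposes no standardness condition on the parameters of the matrix, so this is legitimate. Beyond that, the argument is a routine unfolding of external quantifiers via $\EQ$, and I do not anticipate any serious obstacle.
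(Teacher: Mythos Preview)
Your proof is correct and follows essentially the same route as the paper's own argument: apply $\CSAT$ to $\st(x)\land\Phi(n,x)$ to obtain a possibly nonstandard choice function, then use $\ACzint$ on the internal equality $x = h(n)$ to replace it by a standard one. Your explicit remark that $\ACzint$ is being applied with a nonstandard parameter in the internal matrix is a point the paper leaves implicit, and it is good that you flagged it.
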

\begin{proof}
We work in $\PP + \CSAT + \ACzint$ and need to prove
\[ \forallst n^0 \, \existsst x^\sigma \, \Phi(n,x)\ \rightarrow \existsst f^{0 \to \sigma} \, \forallst n^0 \Phi(n,f(n)). \]
So assume $\forallst n^0 \, \existsst x^\sigma \, \Phi(n,x)$, or, in other words,
\[ \forallst n^0 \, \exists x^\sigma \, ( \, \st(x) \land \Phi(n,x) \, ). \]
Then it follows from $\CSAT$ that there is a (not necessarily standard) function $g: 0 \to \sigma$ such that
\begin{equation} \label{fiets}
 \forallst n^0 \, ( \, \st(g(n)) \land \Phi(n, g(n)) \, ).
\end{equation}
In particular,
\[ \forallst n^0 \, \existsst x^\sigma \, ( \, x = g(n) \, ), \]
so by $\ACzint$ there is a \emph{standard} function $f: 0 \to \sigma$ such that
\[ \forallst n^0 \, f(n) = g(n). \]
But then it follows from (\ref{fiets}) that
\[ \forallst n^0 \, \Phi(n, f(n)), \]
as desired.
\end{proof}
So, over $\PP$, the principle $\AC_0^{\st{}}$ is equivalent to the conjunction of $\ACzint$ and $\CSAT$.
\end{rema}

\begin{rema}{transfer}
We have shown that $\PP + \CSAT$ can be interpreted in $\epa + \BR$, where $\BR$ stands for Spector's bar recursion, which has the strength of second-order arithmetic.  We could also add countable choice
\[ \forall n^0 \, \exists^\sigma x \, \varphi(n, x) \to \exists F^{0 \to \sigma} \, \forall n^0 \, \varphi(n, F(n)) \]
to the interpreting system and still get a system with the strength of second-order arithmetic. If we do this, we can also interpret transfer with numerical parameters, by which we mean
\[ \NPTP \quad : \quad \forallst \tup t \, ( \, \forallst x \, \varphi(x, \tup t) \to \forall x \, \varphi(x, \tup t) \, ), \]
where the only free variables which are allowed to occur in $\varphi$ are $x$ and $\tup t$, and all variables in $\tup t$ are of type 0. (See Theorem 5 and Remark 6 in \cite{bergsanders14}.) This strengthens earlier results from \cite{hensonkaufmannkeisler84}.
\end{rema}

\section{Conclusion}

We have shown that countable saturation is a weak principle in the intuitionistic context, and is even provable in the intuitionistic nonstandard system we introduced in \cite{bergbriseidsafarik12}. It does however add considerably to the strength of the classical systems we considered there. Indeed, by making heavy use of earlier work of Escard\'o and Oliva we could calibrate its precise strength as that of full second-order arithmetic. This confirms a pattern first observed by Henson, Kaufmann and Keisler \cite{hensonkaufmannkeisler84}: also in their work countable saturation had the effect of making their systems, which originally had the strength of arithmetic, as strong as full second-order arithmetic. Their work in \cite{hensonkeisler86} also suggests that the full saturation principle
\[ \SAT :\equiv  \forallst x^\sigma \, \exists y^\tau \, \Phi(x, y) \to \exists f^{\sigma \to \tau} \, \forallst x^\sigma \, \Phi(x, f(x)) \]
should make the classical systems we considered as strong as full higher-order arithmetic. It would be interesting to see if that is true (the work of Awodey and Eliasson might be useful here \cite{eliasson04,awodeyeliasson04}).

In the intuitionistic context $\SAT$ is again quite weak. Indeed, if $\LEM_\intern$ is the Law of Excluded Middle for internal formulas, then $\HH + \LEM_\intern$ proves the finite axiom of choice for sequences by \reflemm{FACanddecofeq} and $\SAT$ by the argument in \reftheo{DstrealizersforCSAT}. Since $\HH + \LEM_\intern$ is conservative over $\epa$ by the main result of \cite{bergbriseidsafarik12}, this shows that $\HH + \SAT$, $\epa$ and $\eha$ have the same proof-theoretic strength. But we were unable to answer the questions whether  $\SAT$ is provable in $\HH$ and whether $\HH + \SAT$ is a conservative extension of $\eha$. 

\bibliographystyle{plain} \bibliography{dst}

\end{document}